\newcommand{\vertiii}[1]{{\left\vert\kern-0.25ex\left\vert\kern-0.25ex\left\vert #1
		\right\vert\kern-0.25ex\right\vert\kern-0.25ex\right\vert}}
\newcommand{\A}{\mathbb{A}}
\newcommand{\B}{\mathbb{B}}
\def\Xint#1{\mathchoice
	{\XXint\displaystyle\textstyle{#1}}%
	{\XXint\textstyle\scriptstyle{#1}}%
	{\XXint\scriptstyle\scriptscriptstyle{#1}}%
	{\XXint\scriptscriptstyle\scriptscriptstyle{#1}}%
	\!\int}
\def\XXint#1#2#3{{\setbox0=\hbox{$#1{#2#3}{\int}$ }
		\vcenter{\hbox{$#2#3$ }}\kern-.6\wd0}}
\def\dashint{\Xint-}
\crefname{Thm}{Theorem}{Theorems}
\crefname{Prop}{Proposition}{Propositions}
\crefname{Lem}{Lemma}{Lemmas}
\theoremstyle{plain}
\newtheorem{Thm}{Theorem}
\newtheorem{Prop}{Proposition}
\newtheorem{Lem}[Prop]{Lemma}
\newtheorem{Qn}[Prop]{Question}
\newtheorem*{namedthm}{\namedthmname}
\newcounter{namedthm}
\theoremstyle{definition}
\newtheorem{Def}[Prop]{Definition}
\newtheorem{Not}[Prop]{Notation}
\newtheorem{Rmk}[Prop]{Remark}
\newtheorem{Conv}[Prop]{Convention}
\newcommand{\C}{\mathbb{C}}
\newcommand{\Z}{\mathbb{Z}}
\newcommand{\N}{\mathbb{N}}
\newcommand{\R}{\mathbb{R}}
\renewcommand{\epsilon}{\varepsilon}
\author{Aidan Young}
\address{Ben-Gurion University of the Negev, Israel}
\title{An ergodic Lebesgue differentiation theorem}
\email{\url{youngaid@post.bgu.ac.il}}
\begin{document}
	
\begin{abstract}
	We show that if $(X, \mu, T)$ is a probability measure-preserving dynamical system, and $\mathscr{P}$ is a countable partition of $(X, \mu)$, then the limit
	$$
	\lim_{n, k \to \infty} \mathbb{E} \left[ \frac{1}{k} \sum_{j = 0}^{k - 1} f \circ T^j \mid \bigvee_{i = 0}^{n - 1} T^{-i} \mathscr{P} \right]
	$$
	exists almost surely for all $f \in L^p(\mu), p > 1$. We prove this as a corollary of a geometric result: that if $(X, \mu)$ is a metric measure space on which the Hardy-Littlewood maximal inequality holds, then the limit
	$$\lim_{r \searrow 0, k \to \infty} \mu(B(x, r))^{-1} \int_{B(x, r)} \frac{1}{k} \sum_{j = 0}^{k - 1} f \circ T^j \mathrm{d} \mu$$
	exists almost surely.
	\end{abstract}
	
	\maketitle
	
	Consider a probability measure-preserving dynamical system $(X, \mu, T)$, where $X$ is a metric space and $\mu$ is a Borel probability measure with respect to that metric topology. Sequences of the form
	\begin{equation}\label{eq:TSD}
		\left( \mu(F_k)^{-1} \int_{F_k} \frac{1}{k} \sum_{j = 0}^{k - 1} f \circ T^j \mathrm{d} \mu \right)_{k = 1}^\infty,
	\end{equation}
	where $(F_k)_{k = 1}^\infty$ is a sequence of measurable subsets of $X$ with positive measure, were introduced in \cite{Assani-Young} under the name of ``spatial-temporal differentiations." These get their name from the way that they combine a ``spatial average" (i.e. the average over a subset $F_k$ of the space $(X, \mu)$) with a ``temporal average" (i.e. the ergodic average, which can be understood as an average over time). In \cite{Assani-Young}, we inquired about these averages' limiting behaviors in various special cases. We also considered a related question: if we treat the sequence $(F_k)_{k = 1}^\infty$ in \eqref{eq:TSD} as somehow being ``randomly chosen," then can we say something about the limiting behavior of this sequence \eqref{eq:TSD} in a probabilistic sense? In particular, we studied limits of this form where the $F_k$ are chosen to be a sequence of neighborhoods of a point $x \in X$ with rapidly decaying diameter, and found that under suitable continuity conditions on the map $T$, the sequence \eqref{eq:TSD} would converge a.s. for continuous $f$.
	
	Here, we expand this investigation to consider measure-preserving (but not necessarily continuous) transformations $T$, as well as $f \in L^p(\mu)$ for $p > 1$, which need not be continuous. We conclude the following:
	
	\begin{Thm}\label{thm:martingale theorem}
	Let $(X, \mu, T)$ be a probability measure-preserving system, and $\mathscr{P}$ a countable measurable partition of $(X, \mu)$. Consider $f \in L^p(\mu)$ for $p > 1$. Then
	\begin{align*}
	\lim_{n, k \to \infty} \mathbb{E} \left[ \frac{1}{k} \sum_{j = 0}^{k - 1} f \circ T^j \mid \bigvee_{i = 0}^{n - 1} T^{-i}\mathscr{P} \right]	& = \mathbb{E} \left[ f \mid \mathfrak{I} \cap \mathfrak{P} \right]	& \textrm{a.s.,}
	\end{align*}
	where $\mathfrak{I}$ is the $\sigma$-algebra of $T$-invariant measurable subsets of $X$, and $\mathfrak{P}$ is the $\sigma$-algebra generated by $\bigcup_{n = 1}^\infty \bigvee_{i = 0}^{n - 1} T^{-i} \mathscr{P}$.
	\end{Thm}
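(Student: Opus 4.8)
\section*{Proof proposal}

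The plan is to treat $\mathbb{E}\bigl[\,\cdot\mid\bigvee_{i=0}^{n-1}T^{-i}\mathscr{P}\bigr]$ as a martingale conditional expectation along the increasing filtration $\mathscr{P}_n := \bigvee_{i=0}^{n-1}T^{-i}\mathscr{P}$ and to couple it with the pointwise ergodic theorem through a Banach-principle argument; indeed the expression inside the limit is exactly the conditional-expectation (filtration) analogue of the spatial-temporal averages \eqref{eq:TSD}, with the atom of $\mathscr{P}_n$ containing $x$ playing the role of a shrinking neighborhood. Writing $A_k f = \frac1k\sum_{j=0}^{k-1}f\circ T^j$ and $g_{n,k} = \mathbb{E}[A_k f\mid\mathscr{P}_n]$, the reduction I would carry out is the standard one: to prove joint a.e.\ convergence of the net $(g_{n,k})$ it suffices to (a) establish an $L^p$ maximal inequality for $\sup_{n,k}|g_{n,k}|$, (b) prove a.e.\ convergence for $f$ in a dense subclass of $L^p(\mu)$, and (c) identify the limit. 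Subadditivity of the oscillation functional $Of = \limsup_{N}\sup_{n,k,n',k'\ge N}|g_{n,k}-g_{n',k'}|$ together with (a) and (b) then forces $Of=0$ a.e.\ for every $f\in L^p(\mu)$.

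For step (a), the point is that the two maximal operators compose: since conditional expectation is a positive $L^p$-contraction,
$$\sup_{n,k}|g_{n,k}|\le\sup_n\mathbb{E}\bigl[\sup_k A_k|f|\bigm|\mathscr{P}_n\bigr] = M_{\mathrm{D}}\bigl(M_{\mathrm{T}}f\bigr),$$
where $M_{\mathrm{T}}f = \sup_k A_k|f|$ is the ergodic (Hardy--Littlewood--Wiener) maximal function and $M_{\mathrm{D}}$ is Doob's martingale maximal function. Both are bounded on $L^p$ for $p>1$, so their composition is, giving $\bigl\|\sup_{n,k}|g_{n,k}|\bigr\|_p\le C_p\|f\|_p$. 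This is precisely where $p>1$ is used and cannot be dropped: on $L^1$ each operator is only of weak type $(1,1)$, and the composition need not be.

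For step (b), I would use the dense class furnished by the mean ergodic theorem in $L^p$ ($1<p<\infty$): $L^p(\mu)$ is the closed sum of the $T$-invariant functions and the coboundaries $\{g-g\circ T : g\in L^\infty\}$. On a bounded $\mathfrak{I}$-measurable $f$ one has $A_k f = f$, so $g_{n,k} = \mathbb{E}[f\mid\mathscr{P}_n]$ converges a.e.\ to $\mathbb{E}[f\mid\mathfrak{P}]$ by martingale convergence, with no $k$-dependence; on a bounded coboundary $f = g-g\circ T$ one has $|g_{n,k}|\le\|A_k f\|_\infty\le 2\|g\|_\infty/k\to0$ uniformly in $n$. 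Hence the joint limit exists a.e.\ on the dense class, and combined with (a) it exists a.e.\ for all $f\in L^p(\mu)$.

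The step I expect to be the real obstacle is (c), identifying the limit as $\mathbb{E}[f\mid\mathfrak{I}\cap\mathfrak{P}]$ rather than the naive iterated limit $\mathbb{E}[\mathbb{E}[f\mid\mathfrak{I}]\mid\mathfrak{P}]$. The crux is a commutation lemma: if $h$ is $\mathfrak{I}$-measurable then $\mathbb{E}[h\mid\mathfrak{P}]$ is again $\mathfrak{I}$-measurable, hence $\mathfrak{I}\cap\mathfrak{P}$-measurable and equal to $\mathbb{E}[h\mid\mathfrak{I}\cap\mathfrak{P}]$. I would prove this using the forward-invariance $T^{-1}\mathfrak{P}\subseteq\mathfrak{P}$ and the measure-preserving identity $\mathbb{E}[\,\cdot\circ T\mid T^{-1}\mathfrak{P}] = \mathbb{E}[\,\cdot\mid\mathfrak{P}]\circ T$: writing $\phi = \mathbb{E}[h\mid\mathfrak{P}]$ and using $h\circ T = h$ gives $\phi\circ T = \mathbb{E}[\phi\mid T^{-1}\mathfrak{P}]$, whence $\|\phi\|_2 = \|\phi\circ T\|_2 = \|\mathbb{E}[\phi\mid T^{-1}\mathfrak{P}]\|_2$; equality in the $L^2$ conditional-expectation contraction forces $\phi$ to be $T^{-1}\mathfrak{P}$-measurable, and then $\phi\circ T = \mathbb{E}[\phi\mid T^{-1}\mathfrak{P}] = \phi$. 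Applying this to $h = \mathbb{E}[f\mid\mathfrak{I}]$ and noting that coboundaries have vanishing invariant part identifies the dense-class limit, and hence the a.e.\ limit, as $\mathbb{E}[f\mid\mathfrak{I}\cap\mathfrak{P}]$; the $L^p$ bound from (a) gives the uniform integrability needed to match the a.e.\ and $L^p$ limits.
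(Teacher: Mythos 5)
Your proposal is correct, but it takes a genuinely different route from the paper. The paper proves this theorem as a literal corollary of \Cref{thm:Ergodic Lebesgue Differentiation}: it endows $X$ with the pseudometric $\rho$ whose balls $B(x,1/n)$ are the atoms of $\bigvee_{i=0}^{n-1}T^{-i}\mathscr{P}$ containing $x$, observes that Doob's maximal inequality is exactly the Hardy--Littlewood property for this metric, and then invokes the geometric theorem. You instead run the Banach-principle argument directly in the martingale setting, and your two main ingredients differ from the paper's in instructive ways. For the maximal inequality, your domination $\sup_{n,k}|g_{n,k}|\le M_{\mathrm{D}}(M_{\mathrm{T}}f)$ is the exact analogue of the paper's $\mathbf{U}^*\le\mathbf{H}^*\mathbf{A}^*$, so that part is parallel. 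For the dense class, the paper uses all of $L^\infty(\mu)$ and handles the temporal limit by the smoothing trick $\A_k\A_K f\approx\A_k f$ plus the mean ergodic theorem (\Cref{lem:Kernel of invariant expectation}), whereas you use the reflexive-space mean ergodic decomposition into bounded invariant functions and bounded coboundaries; both work, and yours is arguably more elementary here since on coboundaries the $k$-decay is uniform in $n$. The most substantive difference is your step (c): the paper's deduction passes from the $\mathbb{E}[f\mid\mathfrak{I}]$ in the statement of \Cref{thm:Ergodic Lebesgue Differentiation} to $\mathbb{E}[f\mid\mathfrak{I}\cap\mathfrak{P}]$ rather tersely, while you make the required commutation lemma ($h$ being $\mathfrak{I}$-measurable implies $\mathbb{E}[h\mid\mathfrak{P}]=\mathbb{E}[h\mid\mathfrak{I}\cap\mathfrak{P}]$) fully explicit, and your proof of it via $T^{-1}\mathfrak{P}\subseteq\mathfrak{P}$, the identity $\mathbb{E}[\,\cdot\circ T\mid T^{-1}\mathfrak{P}]=\mathbb{E}[\,\cdot\mid\mathfrak{P}]\circ T$, and equality in the $L^2$ contraction is sound (apply it to bounded $h$ first and pass to the limit if $p<2$). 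The only point worth spelling out further is the final measurability step: from $\phi\circ T=\phi$ a.e.\ and $\phi$ being $\mathfrak{P}$-measurable you need $\phi$ to agree a.e.\ with a function measurable for the literal intersection $\mathfrak{I}\cap\mathfrak{P}$; this follows because for $A=\{\phi>c\}\in\mathfrak{P}$ the strictly invariant set $\limsup_n T^{-n}A$ lies in $\mathfrak{I}\cap\mathfrak{P}$ (again using $T^{-1}\mathfrak{P}\subseteq\mathfrak{P}$) and differs from $A$ by a null set. With that remark added, your argument is a complete and self-contained alternative proof that avoids the metric-space machinery entirely.
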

	
	Although \Cref{thm:martingale theorem} has \emph{prima facie} no geometry to it, we prove it as a special case of \Cref{thm:Ergodic Lebesgue Differentiation}, which is a kind of `'ergodic Lebesgue differentiation theorem."
	
	\begin{Thm}\label{thm:Ergodic Lebesgue Differentiation}
	Consider a probability measure-preserving dynamical system $(X, \mu, T)$, where $X$ is a metric space, and $\mu$ is a Borel probability measure on $X$ such that the associated centered Hardy-Littlewood maximal operator is of weak type $(1, 1)$, i.e. there exists a constant $C \geq 1$ such that
	\begin{align*}
	\mu \left( \left\{ x \in X : \sup_{r > 0} \mu(B(x, r)) \int_{B(x, r)} |f| \mathrm{d} \mu \geq \epsilon \right\} \right)	& \leq \frac{C}{\epsilon} \|f\|_1	& \textrm{for all $f \in L^1(\mu), \epsilon > 0$.}
	\end{align*}
	Then for all $f \in L^p(\mu), p > 1$, we have
	\begin{align*}
	\lim_{r \searrow 0, k \to \infty} \mu(B(x, r))^{-1} \int_{B(x, r)} \frac{1}{k} \sum_{j = 0}^{k - 1} f \circ T^j \mathrm{d} \mu	& = \mathbb{E} \left[ f \mid \mathfrak{I} \right]	& \textrm{a.s.},
	\end{align*}
	where $\mathfrak{I}$ is the $\sigma$-algebra of $T$-invariant measurable subsets of $X$.
	\end{Thm}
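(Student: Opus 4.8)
The plan is to read this as a two-parameter almost-everywhere convergence statement and attack it by the Banach-principle strategy: a maximal inequality valid on all of $L^p$, combined with a.e.\ convergence on a dense subclass. Throughout write $A_k f = \frac1k\sum_{j=0}^{k-1} f\circ T^j$ for the ergodic averages, $M_r f(x) = \mu(B(x,r))^{-1}\int_{B(x,r)} f\,\mathrm d\mu$ for the spatial averages, and $g = \mathbb E[f\mid\mathfrak I]$ for the candidate limit. The essential difficulty is that the limit is \emph{joint} in $(r,k)$: knowing that $M_r A_k f\to g$ a.e.\ as $r\searrow0$ for each fixed $k$ (Lebesgue differentiation) and that $A_k f\to g$ a.e.\ as $k\to\infty$ (Birkhoff) does \emph{not} by itself give joint convergence, since neither passage is uniform in the other parameter. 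The maximal inequality is exactly what repairs this.

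First I would set up the maximal inequality. Let $A^\ast f=\sup_{k\ge1}|A_k f|$ be the ergodic maximal function, $Mh=\sup_{r>0}M_r|h|$ the centered Hardy--Littlewood maximal function, and $S^\ast f=\sup_{r>0,\,k\ge1}|M_r A_k f|$. Since $M_r$ is a positive averaging operator, $|M_r A_k f|\le M_r|A_k f|\le M_r(A^\ast f)\le M(A^\ast f)$ pointwise, so $S^\ast f\le M(A^\ast f)$. The maximal ergodic theorem makes $A^\ast$ of strong type $(p,p)$ for $p>1$, while the hypothesised weak-$(1,1)$ bound for $M$, interpolated (Marcinkiewicz) against the trivial bound $\|Mh\|_\infty\le\|h\|_\infty$, makes $M$ of strong type $(p,p)$ for $p>1$. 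Composing,
\[
\|S^\ast f\|_p\le\|M(A^\ast f)\|_p\le C_p\|A^\ast f\|_p\le C_p C_p'\|f\|_p ,
\]
which is the maximal inequality I need.

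Next I would exhibit a dense subclass on which the joint limit is transparent, namely the linear span $\mathcal D$ of the bounded $T$-invariant functions together with the bounded coboundaries $\{h-h\circ T:h\in L^\infty(\mu)\}$. That $\mathcal D$ is dense in $L^p(\mu)$ for $1<p<\infty$ follows from the mean ergodic theorem: the Koopman operator $f\mapsto f\circ T$ is an $L^p$-isometry, so $L^p(\mu)=\mathcal I_p\oplus\overline{\{h-h\circ T:h\in L^p\}}$ with $\mathbb E[\,\cdot\mid\mathfrak I]$ the projection onto the invariant summand $\mathcal I_p=\ker(I-U)$; truncation shows the bounded invariant functions are dense in $\mathcal I_p$ and the bounded coboundaries dense in the other summand. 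On $\mathcal D$ the two limits decouple. If $g_0$ is bounded and invariant, then $A_k g_0=g_0$, so $M_r A_k g_0=M_r g_0\to g_0=\mathbb E[g_0\mid\mathfrak I]$ a.e.\ as $r\searrow0$ by the Lebesgue differentiation theorem (the $k$-independent consequence of the weak-$(1,1)$ hypothesis), uniformly in $k$. If $f_0=h-h\circ T$ with $h\in L^\infty$, the sum telescopes, $A_k f_0=\tfrac1k(h-h\circ T^k)$, so $\sup_{r}|M_r A_k f_0|\le\|A_k f_0\|_\infty\le 2\|h\|_\infty/k\to0$ as $k\to\infty$, uniformly in $r$, while $\mathbb E[f_0\mid\mathfrak I]=0$. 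By linearity the joint limit equals $\mathbb E[\,\cdot\mid\mathfrak I]$ a.e.\ on all of $\mathcal D$.

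Finally I would combine the two ingredients. Given $f\in L^p$ and $\eta>0$, choose $s\in\mathcal D$ with $\|f-s\|_p<\eta$ and set $e=f-s$. Using $M_r A_k s\to\mathbb E[s\mid\mathfrak I]$ a.e.\ together with linearity,
\[
\limsup_{r\searrow0,\,k\to\infty}\bigl|M_r A_k f-\mathbb E[f\mid\mathfrak I]\bigr|\le S^\ast e+\bigl|\mathbb E[e\mid\mathfrak I]\bigr|\qquad\text{a.e.}
\]
Since $\mathbb E[\,\cdot\mid\mathfrak I]$ is an $L^p$-contraction, Markov's inequality and the maximal inequality give, for every $\lambda>0$,
\[
\mu\Bigl(\bigl\{\textstyle\limsup_{r,k}|M_r A_k f-\mathbb E[f\mid\mathfrak I]|>\lambda\bigr\}\Bigr)\le\frac{2^p\bigl((C_pC_p')^p+1\bigr)}{\lambda^p}\,\eta^p .
\]
Letting $\eta\searrow0$ forces this to vanish for every $\lambda>0$, so the $\limsup$ is $0$ a.e.; hence the joint limit exists and equals $\mathbb E[f\mid\mathfrak I]$ almost surely. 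I expect the main obstacle to be the first step: establishing that the composite two-parameter maximal operator $S^\ast$ is controlled on $L^p$, since it is precisely this bound that upgrades the easy, decoupled convergence on $\mathcal D$ to genuine joint a.e.\ convergence for arbitrary $f\in L^p$.
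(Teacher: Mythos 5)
Your argument is correct, and its skeleton is the paper's: the joint maximal operator is dominated by the composition $\mathbf{H}^* \circ \mathbf{A}^*$ of the Hardy--Littlewood and ergodic maximal operators, hence of strong type $(p,p)$ by interpolation and the dominated ergodic theorem, and a Banach-principle estimate then transfers a.e.\ convergence from a dense subclass to all of $L^p(\mu)$. Where you genuinely diverge is the choice of dense subclass and the proof of convergence on it. The paper takes the dense class to be all of $L^\infty(\mu)$: writing $f = f^* + (f - f^*)$, it handles the invariant part by Lebesgue differentiation, and for the mean-zero part it uses a double-averaging trick --- replacing $\A_k f$ by $\A_k \A_K f$ (which differ in $L^\infty$ by a quantity tending to $0$ as $k \to \infty$ for each fixed $K$), bounding $\limsup_{r \searrow 0, k \to \infty} |U_{r,k} f|$ by $\mathbf{H}^* \mathbf{A}^* \A_K f$, and letting $K \to \infty$ using the mean ergodic theorem in $L^2$ together with the $L^2$-boundedness of $\mathbf{H}^* \mathbf{A}^*$. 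You instead take the span of bounded invariant functions and bounded coboundaries, on which the joint limit decouples transparently (Lebesgue differentiation in $r$ for the invariant part; the telescoping bound $\|\A_k(h - h \circ T)\|_\infty \leq 2\|h\|_\infty / k$, uniform in $r$, for the coboundary part), paying for this with the mean ergodic splitting $L^p = \ker(I - U) \oplus \overline{\{h - h \circ T\}}$ of the Koopman isometry, plus truncation, to establish density. Both routes are complete. Yours is more elementary on the dense class; the paper's ``all of $L^\infty$'' formulation is the shape of statement it reuses (with a necessarily different proof, via the Kronecker factor) for the subsequential averages along squares in its appendix, where coboundaries no longer telescope.
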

	
	For the sake of brevity, we call this condition on $(X, \mu)$ the \emph{Hardy-Littlewood property} (c.f. \Cref{def:HLP}). This property is known to hold for several classes of metric measure spaces. Classically, it holds for all doubling metric measure spaces $(X, \mu)$ (c.f. \cite[Theorem 2.2]{HeinonenLectures}), which encompasses, for example, the torus $\R^n / \Z^n$ with the standard Euclidean metric and Haar probability measure. This property is also known to hold for all Borel probability measures on $\R$, but not all Borel probability measures on $\R^2$ (c.f. \cite{SjogrenMaximalFunction}). While the Hardy-Littlewood property implies that the (centered) Lebesgue differentiation theorem holds on that space, the converse fails. For example, while not all Borel probability measures on $\R^2$ have the Hardy-Littlewood property, they all accommodate the Lebesgue differentiation theorem, i.e. $\mu(B(x, r))^{-1} \int_{B(x, r)} f \mathrm{d} \mu \stackrel{r \searrow 0}{\to} f(x)$ a.s. for all $f \in L^1(\mu)$ (c.f. \cite[Theorem 5.8.8]{Bogachev1}). However, a classical result of S. Sawyer (c.f. \cite[Theorem 1]{SawyerMaximalInequality}) provides conditions under which the Lebesgue differentiation theorem can imply the Hardy-Littlewood property, namely when the group of $\mu$-preserving isometries of $X$ acts ergodically on $(X, \mu)$. The question of whether this Hardy-Littlewood property is \emph{necessary} to \Cref{thm:Ergodic Lebesgue Differentiation} is the subject of \Cref{qn:Lebesgue differentiation property}.
	
	\Cref{sec:Proof of main theorem} will be devoted to proving \Cref{thm:Ergodic Lebesgue Differentiation}. In \Cref{sec:martingale case}, we will prove \Cref{thm:martingale theorem} as a special case of \Cref{thm:Ergodic Lebesgue Differentiation}. In \Cref{sec:Open questions}, we pose some questions left open by \Cref{thm:Ergodic Lebesgue Differentiation}. Finally, in \Cref{sec:exotic averages}, we prove a version of \Cref{thm:Ergodic Lebesgue Differentiation} for subsequential ergodic averages along the square, and comment on how the techniques we employ there can be generalized to other subsequential or weighted ergodic averages.
	
	\section{Proving \Cref{thm:Ergodic Lebesgue Differentiation}}\label{sec:Proof of main theorem}
	
	The naive ``proof" of \Cref{thm:Ergodic Lebesgue Differentiation} is that it should be a synthesis of the Lebesgue differentiation theorem and the pointwise ergodic theorem, i.e.
	\begin{align*}
		\mu(B(x, r))^{-1} \int_{B(x, r)} \frac{1}{k} \sum_{j = 0}^{k - 1} f \circ T^j \mathrm{d} \mu	& \approx \frac{1}{k} \sum_{j = 0}^{k - 1} f \left( T^j x \right)	& \textrm{[Lebesgue differentiation theorem]} \\
		& \approx f^* (x)	& \textrm{[pointwise ergodic theorem]}	& ,
	\end{align*}
	where we write $f^* : = \mathbb{E} \left[ f \mid \mathfrak{I} \right]$. A variation on this argument is used at several points in \cite{Assani-Young}, where instead of using the Lebesgue differentiation theorem, the first approximation relies on more explicit estimates based on the continuity properties of $T$ and $f$, on the condition that the radii $r = r_k$ depend on $k$, and satisfy a ``rapid decay" condition determined by $T$. In this article, however, we use more classical measure-theoretic techniques which let us get rid of this rapid decay hypothesis.
	
	Our proof of \Cref{thm:Ergodic Lebesgue Differentiation} has a classical structure. We consider the operators $U_{r, k} f(x) : = \mu(B(x, r))^{-1} \int_{B(x, r)} \frac{1}{k} \sum_{j = 0}^{k - 1} f \circ T^j \mathrm{d} \mu$, and
	\begin{enumerate}
		\item establish a dense set $\mathcal{G} \subseteq L^p(\mu)$ along which $U_{r, k} g \stackrel{r \searrow 0, k \to \infty}{\to} g^*$ a.s. for all $g \in \mathcal{G}$ (c.f. \Cref{lem:L infty convergence}),
		\item find a maximal inequality for the operator family $(U_{r, k})_{r > 0, k \in \N}$ (c.f. \Cref{lem:Facts about HLP}), and finally
		\item use a standard Banach principle argument to complete the proof.
	\end{enumerate}
	
	Before proceeding, we establish some notation and terminology that we will use throughout the article.
	
	\begin{Conv}
	Throughout this article, we consider a probability measure-preserving dynamical system $(X, \mu, T)$. Given a measurable set $E \subseteq X$ with positive measure $\mu(E) > 0$, an intgrable function $f \in L^1(\mu)$, as well as $k \in \N$, we write
	\begin{align*}
	\dashint_E f \mathrm{d} \mu	& : = \mu(E)^{-1} \int_E f \mathrm{d} \mu ,	& \A_k f	& : = \frac{1}{k} \sum_{j = 0}^{k - 1} f \circ T^j .
	\end{align*}
	We also define three related maximal operators:
	\begin{align*}
		\mathbf{H}^* f(x)	& : = \sup_{r > 0} \dashint_{B(x, r)} |f| \mathrm{d} \mu ,	& \mathbf{A}^* f(x)	& : = \sup_{k \in \N} \A_k |f|(x) ,	& \mathbf{U}^* f(x)	& = \sup_{r > 0} \sup_{k \in \N} U_{r, k} |f| (x) .
	\end{align*}
	\end{Conv}
	
	\begin{Def}\label{def:HLP}
		Consider a Borel probability measure $\mu$ on a metric space $X$. We say that $\mu$ has the \emph{Hardy-Littlewood property} if the \emph{Hardy-Littlewood maximal operator} $\mathbf{H}^*$ is of weak type $(1, 1)$, i.e. there exists a constant $C \geq 1$ such that
	\begin{align*}
		\mu \left( \left\{ x \in X : \mathbf{H}^* f(x) \geq \epsilon \right\} \right)	& \leq \frac{C}{\epsilon} \|f\|_1	& \textrm{for all $f \in L^1(\mu), \epsilon > 0$.}
	\end{align*}
	\end{Def}
	
	\begin{Rmk}
	Elsewhere in the literature, there are many related operators that are given the name ``Hardy-Littlewood maximal operators." For example, in the context of a general metric measure space, we could consider an uncentered Hardy-Littlewood maximal operator $\mathbf{H}_u^* f(x) = \sup_{r > 0} \sup_{y \in B(x, r)} \dashint_{B(y, r)} |f| \mathrm{d} \mu$. More generally, one could define a maximal operator associated to an arbitrary differentiation basis, as in \cite[II-2]{Guzman}. In order to minimize the need for specialized definitions, we will only consider the centered Hardy-Littlewood maximal operator with respect to balls in this article, although the techniques we use to prove \Cref{thm:Ergodic Lebesgue Differentiation} can be easily generalized along these lines.
	\end{Rmk}
	
	\begin{Lem}\label{lem:Facts about HLP}
	Consider a Borel probability measure $\mu$ on a metric space $X$. If $\mu$ has the Hardy-Littlewood property, then
		\begin{enumerate}
			\item $\dashint_{B(x, r)} f \mathrm{d} \mu \stackrel{r \searrow 0}{\to} f(x)$ a.s. for all $f \in L^1(\mu)$;
			\item for all $p > 1$, there exists a constant $C_p \geq 1$ such that $\left\| \mathbf{H}^* f \right\|_p \leq C_p \|f\|_p$ for all $f \in L^p(\mu)$; and
			\item for all $p > 1$, there exists a constant $D_p \geq 1$ such that $\left\| \mathbf{U}^* f \right\|_p \leq D_p \|f\|_p$ for all $f \in L^p(\mu)$, and this constant is independent of $T$.
		\end{enumerate}
	\end{Lem}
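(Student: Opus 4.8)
The plan is to establish the three parts in sequence, as (2) refines the weak-type hypothesis used in (1) and (3) builds directly on (2). I first record that $\mu$-a.e.\ point lies in the support of $\mu$, so that $\mu(B(x,r)) > 0$ and every average below is defined almost surely. For part (1), I would run the classical derivation of the Lebesgue differentiation theorem from a weak-type maximal inequality. Since a finite Borel measure on a metric space is regular, the bounded continuous (indeed Lipschitz) functions are dense in $L^1(\mu)$, and for continuous $g$, continuity at $x$ gives $\dashint_{B(x,r)} g \, \mathrm{d}\mu \to g(x)$ everywhere on the support. For general $f \in L^1(\mu)$ and $\eta > 0$, I would write $f = g + h$ with $g$ continuous and $\|h\|_1 < \eta$, and control the oscillation $\Lambda f(x) := \limsup_{r \searrow 0} |\dashint_{B(x,r)} f \, \mathrm{d}\mu - f(x)|$. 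Subadditivity of $\limsup$ together with $\Lambda g \equiv 0$ yields $\Lambda f = \Lambda h \leq \mathbf{H}^* h + |h|$, so by the weak-$(1,1)$ hypothesis and Chebyshev, $\mu(\{\Lambda f > \epsilon\}) \leq \frac{2C}{\epsilon}\|h\|_1 + \frac{2}{\epsilon}\|h\|_1 < \frac{2(C+1)\eta}{\epsilon}$. Letting $\eta \searrow 0$ forces $\mu(\{\Lambda f > \epsilon\}) = 0$ for every $\epsilon > 0$, i.e.\ $\Lambda f = 0$ a.s.

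For part (2), I would interpolate. The operator $\mathbf{H}^*$ is sublinear, it is of weak type $(1,1)$ by hypothesis, and it is trivially of strong (hence weak) type $(\infty,\infty)$ with constant $1$, since $\dashint_{B(x,r)} |f| \, \mathrm{d}\mu \leq \|f\|_\infty$. Marcinkiewicz interpolation then gives $\|\mathbf{H}^* f\|_p \leq C_p \|f\|_p$ for all $p \in (1,\infty)$, with $C_p$ depending only on $p$ and the Hardy-Littlewood constant $C$.

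Part (3) is where the spatial and temporal averages meet, and the crux is a pointwise factorization. Since $\A_k |f| \leq \mathbf{A}^* f$ pointwise for every $k$, for each $r, k$ I get $U_{r,k}|f|(x) = \dashint_{B(x,r)} \A_k|f| \, \mathrm{d}\mu \leq \dashint_{B(x,r)} \mathbf{A}^* f \, \mathrm{d}\mu \leq \mathbf{H}^*(\mathbf{A}^* f)(x)$, and taking the supremum over $r$ and $k$ yields the domination $\mathbf{U}^* f \leq \mathbf{H}^*(\mathbf{A}^* f)$. I would then combine part (2) with the Wiener / Dunford--Schwartz maximal ergodic theorem, which gives $\|\mathbf{A}^* f\|_p \leq E_p \|f\|_p$ for $p > 1$ with $E_p$ depending only on $p$ and \emph{not} on $T$, to conclude $\|\mathbf{U}^* f\|_p \leq C_p \|\mathbf{A}^* f\|_p \leq C_p E_p \|f\|_p$. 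Thus $D_p := C_p E_p$ works and is manifestly independent of $T$.

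I do not expect a serious obstacle: all three parts are applications of standard machinery. The genuinely new (and load-bearing) ingredient is the clean factorization $\mathbf{U}^* f \leq \mathbf{H}^*(\mathbf{A}^* f)$, which decouples the spatial and temporal maximal functions and lets the two classical maximal inequalities be composed with a $T$-independent constant. The only points requiring care are the density of continuous functions in $L^1(\mu)$ for a general Borel probability measure on a metric space (handled by regularity) and the a.s.\ well-definedness of the averages, both of which are routine.
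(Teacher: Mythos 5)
Your proposal is correct and follows essentially the same route as the paper: the paper defers parts (1) and (2) to standard references (whose arguments are exactly the density-plus-weak-type and Marcinkiewicz interpolation proofs you spell out), and proves part (3) via the identical factorization $\mathbf{U}^* \leq \mathbf{H}^* \mathbf{A}^*$ combined with the dominated ergodic theorem bound $\|\mathbf{A}^* f\|_p \leq \frac{p}{p-1}\|f\|_p$, whose constant is independent of $T$.
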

	
	\begin{proof}
	A proof of these first two claims can be found in most standard measure theory texts, e.g. \cite[Section 2]{HeinonenLectures}. The last claim can be proven by observing that $\mathbf{U}^* \leq \mathbf{H}^* \mathbf{A}^*$, then combining the second claim of this lemma with the dominated ergodic theorem (c.f. \cite[Theorem 6.3 of Chapter 1]{Krengel}), which says that $\left\| \mathbf{A}^* f \right\|_p \leq \frac{p}{p - 1} \|f\|_p$ for all $p > 1, f \in L^p(\mu)$.
	\end{proof}
	
	\begin{Conv}
	For the remainder of \Cref{sec:Proof of main theorem}, we assume that $X$ is a metric space, and $\mu$ is a Borel probability measure on $X$ with the Hardy-Littlewood property.
	\end{Conv}
	
	\begin{Lem}\label{lem:Kernel of invariant expectation}
	Consider a function $f \in L^\infty(\mu)$. Then
		\begin{align*}
			U_{r, k} \left( f - f^* \right)	& \stackrel{r \searrow 0, k \to \infty}{\to} 0	& \textrm{a.s.}
		\end{align*}
	\end{Lem}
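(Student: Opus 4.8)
The plan is to exploit two independent convergences—Birkhoff's pointwise ergodic theorem in the temporal variable $k$, and the Lebesgue differentiation theorem (\Cref{lem:Facts about HLP}(1)) in the spatial variable $r$—and to decouple the joint limit by means of a decreasing envelope. Write $g := f - f^*$. Since conditional expectation is an $L^\infty$-contraction, $f^* = \mathbb{E}[f \mid \mathfrak{I}]$ lies in $L^\infty(\mu)$, so $g \in L^\infty(\mu)$; moreover $\mathbb{E}[g \mid \mathfrak{I}] = f^* - f^* = 0$, because $f^*$ is already $\mathfrak{I}$-measurable. By Birkhoff's theorem, therefore, $\A_k g \to \mathbb{E}[g \mid \mathfrak{I}] = 0$ almost surely. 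Recall that $U_{r, k} g(x) = \dashint_{B(x, r)} \A_k g \,\mathrm{d}\mu$.

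The key device is the tail supremum $G_K := \sup_{k \ge K} |\A_k g|$. Each $G_K$ is bounded by $\|g\|_\infty$, hence lies in $L^1(\mu)$, and since $\A_k g \to 0$ almost surely the sequence $(G_K)_K$ decreases pointwise to $0$ almost surely. The point of introducing $G_K$ is the uniform domination: for every $k \ge K$ and every $r > 0$,
\[
|U_{r, k} g(x)| \le \dashint_{B(x, r)} |\A_k g| \,\mathrm{d}\mu \le \dashint_{B(x, r)} G_K \,\mathrm{d}\mu .
\]
The right-hand side no longer depends on $k$.

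Now I would fix a point $x$ in the full-measure set on which $G_K(x) \to 0$ and on which the conclusion of \Cref{lem:Facts about HLP}(1) holds for every $G_K$ simultaneously (a countable intersection of full-measure sets). Given $\epsilon > 0$, first choose $K$ with $G_K(x) < \epsilon/2$; then, since $\dashint_{B(x, r)} G_K \,\mathrm{d}\mu \to G_K(x)$ as $r \searrow 0$, choose $\delta > 0$ so that $\dashint_{B(x, r)} G_K \,\mathrm{d}\mu < \epsilon$ whenever $0 < r < \delta$. Combined with the displayed domination, this gives $|U_{r, k} g(x)| < \epsilon$ for all $0 < r < \delta$ and all $k \ge K$, which is exactly the assertion that $U_{r, k}(f - f^*)(x) \to 0$ in the joint limit $r \searrow 0, k \to \infty$.

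I expect the only genuine subtlety to be the joint nature of the limit: establishing convergence separately in $r$ (for fixed $k$) and in $k$ does not by itself yield the joint limit, and a direct estimate of $\dashint_{B(x, r)} \A_k g \,\mathrm{d}\mu$ entangles both parameters. The envelope $G_K$ resolves this by replacing the $k$-dependent integrand $\A_k g$ with a single $K$-indexed integrand to which the Lebesgue differentiation theorem applies, after which the decay $G_K \downarrow 0$ finishes the argument. The boundedness of $f$—hence of $g$ and of every $G_K$—is what guarantees $G_K \in L^1(\mu)$, so that \Cref{lem:Facts about HLP}(1) is available; this is precisely why the lemma is stated for $f \in L^\infty(\mu)$ rather than for general $f \in L^p(\mu)$, with the passage to $L^p$ deferred to the later Banach-principle step.
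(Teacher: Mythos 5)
Your argument is correct, but it is genuinely different from the one in the paper. You decouple the joint limit with the tail envelope $G_K = \sup_{k \geq K} |\A_k g|$, drive $G_K \downarrow 0$ a.s.\ by Birkhoff's pointwise ergodic theorem, and then apply the Lebesgue differentiation theorem (\Cref{lem:Facts about HLP}(1)) to each $G_K$ in a pointwise $\epsilon$--$\delta$ argument. The paper instead reduces to $f^* = 0$, replaces $\A_k f$ by $\A_k \A_K f$ at the cost of an error that is $o(1)$ in $L^\infty$ for fixed $K$, bounds $\limsup_{r, k} |U_{r,k} f|$ by $\mathbf{H}^* \mathbf{A}^* \A_K f$, and then combines the strong $(2,2)$ bounds for $\mathbf{H}^*$ and $\mathbf{A}^*$ with Chebyshev and the mean ergodic theorem ($\|\A_K f\|_2 \to 0$). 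The trade-off: your route uses only the \emph{conclusion} of the differentiation theorem at this stage, not the maximal inequalities --- which is of some interest in light of \Cref{qn:Lebesgue differentiation property} --- but it imports the pointwise ergodic theorem; the paper's route avoids pointwise ergodic convergence entirely, getting by with the mean ergodic theorem, at the price of invoking the $L^2$ boundedness of $\mathbf{H}^*$ and the dominated ergodic theorem. Both are complete proofs, and your closing remark correctly identifies why boundedness of $f$ is what makes the envelope device available here, with the passage to $L^p$ deferred to the Banach-principle step.
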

	
	\begin{proof}
		We can assume without loss of generality that $f^* = 0$ and $\|f\|_\infty \leq 1$. We can then estimate that for fixed $K \in \N$, we have that $\left\| \left( \A_{k} \A_K f \right) - \left( \A_k f \right) \right\|_\infty \stackrel{k \to \infty}{\to} 0 $. Therefore, for any $K \in \N$, we have
		\begin{align*}
		\left| U_{r, k} f(x) \right|	& = \left| \dashint_{B(x, r)} \A_k f \mathrm{d} \mu \right| \\
			& \leq \left| \dashint_{B(x, r)} \A_k \A_K f \mathrm{d} \mu \right| + \left\| \A_k \A_K f - \A_k f \right\|_\infty \\
			& \leq \dashint_{B(x, r)} \mathbf{A}^* \left( \A_K f \right) \mathrm{d} \mu + \left\| \left( \A_{k} \A_K f \right) - \left( \A_k f \right) \right\|_\infty \\
		\Rightarrow \limsup_{r \searrow 0, k \to \infty} \left| U_{r, k} f(x) \right| & \leq \limsup_{r \searrow 0, k \to \infty} \dashint_{B(x, r)} \mathbf{A}^* \A_K f \mathrm{d} \mu \\
			& = \limsup_{r \searrow 0} \dashint_{B(x, r)} \mathbf{A}^* \A_K f \mathrm{d} \mu \\
			& \leq \mathbf{H}^* \mathbf{A}^* \A_K f (x)	& \textrm{for all $K \in \N$.}
		\end{align*}
		Thus for all $\epsilon > 0$, we have
		\begin{align*}
		\mu \left( \left\{ x \in X : \limsup_{r \searrow 0, k \to \infty} \left| U_{r, k} f(x) \right| \geq \epsilon \right\} \right)	& \leq \left( \frac{\left\| \mathbf{H}^* \mathbf{A}^* \A_K f \right\|_2}{\epsilon} \right)^2 \\
			& \leq \frac{4 C_2^2}{\epsilon^2} \left\| \A_K f \right\|_2^2	& \textrm{for all $K \in \N$} \\
		\stackrel{\textrm{mean ergodic theorem}}{\Rightarrow} \mu \left( \left\{ x \in X : \limsup_{r \searrow 0, k \to \infty} \left| U_{r, k} f(x) \right| \geq \epsilon \right\} \right)	& = 0 ,
		\end{align*}
		where $C_2$ is the constant of the same name from \Cref{lem:Facts about HLP}. Therefore $U_{r, k} f(x) \stackrel{r \searrow 0, k \to \infty}{\to} 0$ a.s.
		\end{proof}
	
	\begin{Lem}\label{lem:L infty convergence}
	Consider a function $f \in L^\infty(\mu)$. Then
	\begin{align*}
		U_{r, k} f	& \stackrel{r \searrow 0, k \to \infty}{\to} f^*	& \textrm{a.s.}
	\end{align*}
	\end{Lem}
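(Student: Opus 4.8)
The plan is to reduce this lemma to the preceding one by splitting off the $T$-invariant part of $f$. Write $f = (f - f^*) + f^*$, so that by linearity of $U_{r, k}$ we have $U_{r, k} f = U_{r, k}(f - f^*) + U_{r, k} f^*$. Since $f \in L^\infty(\mu)$, the conditional expectation $f^* = \mathbb{E}[f \mid \mathfrak{I}]$ also lies in $L^\infty(\mu)$ (with $\|f^*\|_\infty \leq \|f\|_\infty$), so both summands are of the form to which our results apply. The first summand $U_{r, k}(f - f^*)$ tends to $0$ a.s. as $r \searrow 0, k \to \infty$ directly by \Cref{lem:Kernel of invariant expectation}. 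Thus it remains only to show that $U_{r, k} f^* \to f^*$ a.s.

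For this second piece I would exploit that $f^*$ is $T$-invariant. Because $f^*$ is measurable with respect to $\mathfrak{I}$, we have $f^* \circ T = f^*$ $\mu$-a.e., and since $T$ is measure-preserving this gives $f^* \circ T^j = f^*$ $\mu$-a.e. for every $j \geq 0$. Consequently the temporal average collapses: $\A_k f^* = \frac{1}{k} \sum_{j = 0}^{k - 1} f^* \circ T^j = f^*$ $\mu$-a.e. for every $k \in \N$. Substituting this into the definition of $U_{r, k}$ yields
\begin{align*}
U_{r, k} f^*(x) & = \dashint_{B(x, r)} \A_k f^* \, \mathrm{d} \mu = \dashint_{B(x, r)} f^* \, \mathrm{d} \mu,
\end{align*}
which is independent of $k$ entirely.

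It then suffices to control the purely spatial limit $r \searrow 0$. Since $f^* \in L^\infty(\mu) \subseteq L^1(\mu)$ and $\mu$ has the Hardy-Littlewood property, the Lebesgue differentiation statement in the first claim of \Cref{lem:Facts about HLP} applies, giving $\dashint_{B(x, r)} f^* \, \mathrm{d} \mu \to f^*(x)$ a.s. as $r \searrow 0$. Because this quantity does not depend on $k$, the joint limit as $r \searrow 0, k \to \infty$ equals this spatial limit, so $U_{r, k} f^* \to f^*$ a.s. Adding back the two pieces gives $U_{r, k} f \to f^*$ a.s., as desired.

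I do not expect a genuine obstacle here, as the heavy lifting was already carried out in \Cref{lem:Kernel of invariant expectation}; the only points requiring care are the passage from a.e. $T$-invariance of $f^*$ to $\A_k f^* = f^*$ for all $k$ (which uses that $T$ preserves $\mu$, so that almost-everywhere identities are stable under composition with $T^j$), and the observation that once $U_{r, k} f^*$ is seen to be independent of $k$, the joint limit in $(r, k)$ degenerates to the single-parameter limit $r \searrow 0$ handled by the Lebesgue differentiation theorem.
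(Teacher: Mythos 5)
Your proposal is correct and is essentially identical to the paper's proof: the same decomposition $f = (f - f^*) + f^*$, with \Cref{lem:Kernel of invariant expectation} handling the first piece and the collapse $U_{r,k} f^* = \dashint_{B(\cdot, r)} f^* \mathrm{d}\mu$ plus the Lebesgue differentiation claim of \Cref{lem:Facts about HLP} handling the second. You merely spell out the a.e.\ $T$-invariance step that the paper leaves implicit.
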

	
	\begin{proof}
	If $f \in L^\infty(\mu)$, then
	\begin{align*}
	U_{r, k} f = U_{r, k} \left( f - f^* \right) + U_{r, k} \left( f^* \right) = U_{r, k} \left( f - f^* \right) + \dashint_{B(\cdot, r)} f^* \mathrm{d} \mu \stackrel{r \searrow 0, k \to \infty}{\to} f^*	&	& \textrm{a.s.,}
	\end{align*}
	where $U_{r, k} \left( f - f^* \right) \stackrel{r \searrow 0, k \to \infty}{\to} 0$ by \Cref{lem:Kernel of invariant expectation} and $\dashint_{B(\cdot, r)} f^* \mathrm{d} \mu \stackrel{r \searrow 0}{\to} f^*$ by \Cref{lem:Facts about HLP}.
	\end{proof}
	
	\begin{proof}[Proof of \Cref{thm:Ergodic Lebesgue Differentiation}]
	Fix $f \in L^p(\mu), p > 1$, and consider an arbitrary $g \in L^\infty(\mu)$. Then
	\begin{align*}
	\limsup_{r \searrow 0, k \to \infty} \left| U_{r, k} f(x) - f^*(x) \right|	& = \limsup_{r \searrow 0, k \to \infty} \left| U_{r, k} (f - g)(x) - (f - g)^*(x) + U_{r, k} g(x) - g^*(x) \right| \\
		\textrm{[\Cref{lem:L infty convergence}]}	& = \limsup_{r \searrow 0, k \to \infty} \left| U_{r, k} (f - g)(x) - (f - g)^*(x) \right| \\
			& \leq \mathbf{U}^*(f - g)(x) + \mathbf{A}^* (f - g) (x) \\
	\Rightarrow \left\| \limsup_{r \searrow 0, k \to \infty} \left| U_{r, k} f(x) - f^*(x) \right| \right\|_p	& \leq \left\| \left( \mathbf{U}^* + \mathbf{A}^* \right) (f - g) \right\|_p \\
		\textrm{[\Cref{lem:Facts about HLP}]}	& \leq \left( D_p + \frac{p}{p - 1} \right) \|f - g\|_p ,
	\end{align*}
	where $D_p$ is the constant of the same name from \Cref{lem:Facts about HLP}, and these limits are taken a.s. For all $\epsilon > 0$, we have
	\begin{align*}
	\mu \left( \left\{ x \in X : \limsup_{r \searrow 0, k \to \infty} \left| U_{r, k} f(x) - f^*(x) \right| \geq \epsilon \right\} \right)	& \leq \frac{\left( D_p + \frac{p}{p - 1} \right)^p}{\epsilon^p} \|f - g\|_p^p ,
	\end{align*}
	where this estimate is taken using Chebyshev's inequality. But we can choose $g \in L^\infty(\mu)$ so as to make $\|f - g\|_p$ arbitrarily small, so
	\begin{align*}
	\mu \left( \left\{ x \in X : \limsup_{r \searrow 0, k \to \infty} \left| U_{r, k} f(x) - f^*(x) \right| \geq \epsilon \right\} \right)	& = 0	& \textrm{for all $\epsilon > 0$,}
	\end{align*}
	implying that $U_{r, k} f \stackrel{r \searrow 0, k \to \infty}{\to} f^*$ a.s.
	\end{proof}
		
	\section{Proving \Cref{thm:martingale theorem}}\label{sec:martingale case}
		
	We will prove \Cref{thm:stronger martingale theorem}, which has \Cref{thm:martingale theorem} as a special case. Note that in this section, we do \emph{not} assume that $X$ is endowed with a metric.
	
	\begin{Not}
	Given two partitions $\mathscr{P}, \mathscr{Q}$ of a set $X$, we write $\mathscr{P} \preceq \mathscr{Q}$ to denote that $\mathscr{Q}$ is finer than $\mathscr{P}$, meaning that every element of $\mathscr{Q}$ is contained in an element of $\mathscr{P}$, i.e. $\forall Q \in \mathscr{Q} \; \exists P \in \mathscr{P} \; (Q \subseteq P)$.
	\end{Not}
	
	\begin{Thm}\label{thm:stronger martingale theorem}
		Let $(X, \mu, T)$ be a probability measure-preserving system, and let $\left( \mathscr{P}^{(n)} \right)_{n = 1}^\infty$ be a sequence of countable measurable partitions of $(X, \mu)$ such that $\mathscr{P}^{(1)} \preceq \mathscr{P}^{(2)} \preceq \cdots$. Consider $f \in L^p(\mu)$ for $p > 1$. Then
	\begin{align*}
		\lim_{n, k \to \infty} \mathbb{E} \left[ \frac{1}{k} \sum_{j = 0}^{k - 1} f \circ T^j \mid \mathscr{P}^{(n)} \right]	& = \mathbb{E} \left[ f \mid \mathfrak{I} \cap \mathfrak{P} \right]	& \textrm{a.s.,}
	\end{align*}
		where $\mathfrak{I}$ is the $\sigma$-algebra of $T$-invariant measurable subsets of $X$, and $\mathfrak{P}$ is the $\sigma$-algebra generated by $\bigcup_{n = 1}^\infty \mathscr{P}^{(n)}$.
	\end{Thm}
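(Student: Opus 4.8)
The plan is to transcribe the three-step architecture of the proof of \Cref{thm:Ergodic Lebesgue Differentiation} into this purely measure-theoretic setting, replacing metric balls by partition atoms. The geometric bridge is an ultrametric: given the refining sequence $\left(\mathscr P^{(n)}\right)$, set $d(x,y) = 2^{-N(x,y)}$ where $N(x,y)$ is the largest $n$ for which $x,y$ lie in a common atom of $\mathscr P^{(n)}$ (and $d(x,y)=0$ if there is no largest such $n$). Then $d$ is an ultrametric whose closed balls are exactly the atoms $P_n(x)$ of $\mathscr P^{(n)}$, whose Borel $\sigma$-algebra is $\mathfrak P$, and for which the operator $U_{r,k}$ satisfies $U_{r,k}f = \mathbb E\!\left[\A_k f \mid \mathscr P^{(n)}\right]$ as $r\searrow 0$ runs the balls $B(x,r)$ through the atoms $P_n(x)$. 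Thus the double limit in the statement is morally $\lim_{r\searrow 0,\,k\to\infty} U_{r,k}f$, and this is the precise sense in which the theorem is a special case of \Cref{thm:Ergodic Lebesgue Differentiation}. Since neither $T$ nor $f$ need be measurable with respect to $d$, however, I would not cite that theorem as a black box but rather re-run its proof with $U_{r,k}$ reinterpreted as above, all computations taking place in the original $\sigma$-algebra.

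First I would establish the maximal inequality, the analogue of \Cref{lem:Facts about HLP}(3). Writing $\mathbf D^* h := \sup_n \mathbb E[|h| \mid \mathscr P^{(n)}]$ for the Doob martingale maximal operator, the pointwise bound $\sup_{n,k}\left|\mathbb E[\A_k f \mid \mathscr P^{(n)}]\right| \le \mathbf D^*\!\left(\mathbf A^* f\right)$ combines with Doob's $L^p$ inequality and the dominated ergodic theorem to give $\left\|\sup_{n,k}\left|\mathbb E[\A_k f \mid \mathscr P^{(n)}]\right|\right\|_p \le C_p \tfrac{p}{p-1}\|f\|_p$. This is exactly the Hardy--Littlewood property of the ultrametric recast in martingale language, and it is what powers the Banach-principle step; notably it needs no hypothesis linking $T$ to the partitions.

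Next I would treat the dense class $\mathcal G = L^\infty(\mu)$, proving the analogues of \Cref{lem:Kernel of invariant expectation} and \Cref{lem:L infty convergence}. For $g\in L^\infty$ with $g^* := \mathbb E[g\mid\mathfrak I] = 0$, the device of \Cref{lem:Kernel of invariant expectation} carries over: for each fixed $K$ one has $\left|\mathbb E[\A_k g \mid \mathscr P^{(n)}]\right| \le \mathbf D^*\mathbf A^*(\A_K g) + \|\A_k\A_K g - \A_k g\|_\infty$, and letting $n,k\to\infty$ and then $K\to\infty$ (using $\|\A_K g\|_2\to 0$ by the mean ergodic theorem) forces $\mathbb E[\A_k g \mid \mathscr P^{(n)}]\to 0$ a.s. Splitting a general $g\in L^\infty$ as $(g-g^*)+g^*$ and using that $\A_k g^* = g^*$, so that $\mathbb E[\A_k g^*\mid\mathscr P^{(n)}] = \mathbb E[g^*\mid\mathscr P^{(n)}]\to\mathbb E[g^*\mid\mathfrak P]$ a.s. by L\'evy's martingale convergence theorem, yields $\mathbb E[\A_k g\mid\mathscr P^{(n)}]\to \mathbb E\!\left[\mathbb E[g\mid\mathfrak I]\mid\mathfrak P\right]$ a.s. The Banach-principle argument from the proof of \Cref{thm:Ergodic Lebesgue Differentiation} then upgrades this to every $f\in L^p(\mu)$, with limit $\mathbb E\!\left[\mathbb E[f\mid\mathfrak I]\mid\mathfrak P\right]$, since $f\mapsto \mathbb E[\mathbb E[f\mid\mathfrak I]\mid\mathfrak P]$ is an $L^p$-contraction.

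The step I expect to be the main obstacle is the final identification $\mathbb E\!\left[\mathbb E[f\mid\mathfrak I]\mid\mathfrak P\right] = \mathbb E[f\mid\mathfrak I\cap\mathfrak P]$, i.e. that this single alternating projection already lands in $L^p(\mathfrak I\cap\mathfrak P)$. This is not a formal identity about conditional expectations — in general $\mathbb E[\mathbb E[\cdot\mid\mathfrak I]\mid\mathfrak P]$ is only the first step of the von Neumann alternating-projection iteration — so it must be drawn out of the way $T$ interacts with the filtration. The route I would take is to show that $G := \mathbb E[g^*\mid\mathfrak P]$ is itself $T$-invariant whenever $g^*$ is invariant: since $G\circ T = \mathbb E[g^*\mid T^{-1}\mathfrak P] = \mathbb E[G\mid T^{-1}\mathfrak P]$ while $\|G\circ T\|_2 = \|G\|_2$ by measure-preservation, the equality of norms forces $G$ to be $T^{-1}\mathfrak P$-measurable and hence $T$-invariant, \emph{provided} $T^{-1}\mathfrak P\subseteq\mathfrak P$. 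Invariance of $G$ makes it $\mathfrak I\cap\mathfrak P$-measurable, whence $\mathbb E[g^*\mid\mathfrak P] = \mathbb E[g^*\mid\mathfrak I\cap\mathfrak P] = \mathbb E[g\mid\mathfrak I\cap\mathfrak P]$ by the tower property. The genuinely delicate point is therefore the forward-invariance $T^{-1}\mathfrak P\subseteq\mathfrak P$ of the generated $\sigma$-algebra; it holds automatically in the motivating case $\mathscr P^{(n)} = \bigvee_{i=0}^{n-1}T^{-i}\mathscr P$ of \Cref{thm:martingale theorem}, and it is exactly where any hypothesis on the sequence $\left(\mathscr P^{(n)}\right)$ relating it to $T$ would have to be invoked.
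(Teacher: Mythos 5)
Your construction coincides with the paper's: it too metrizes the filtration (via $\rho(x_1,x_2)=1/(m+\tfrac12)$ with $m$ the first level separating $x_1$ from $x_2$, so that balls are atoms), identifies the resulting $\mathbf H^*$ with the Doob maximal operator $\sup_n\mathbb E\left[\,|\cdot|\mid\mathscr P^{(n)}\right]$, and obtains the Hardy--Littlewood property from Doob's inequality. The difference is that the paper then invokes \Cref{thm:Ergodic Lebesgue Differentiation} as a black box, while you re-run its proof inside the original $\sigma$-algebra; and the ``main obstacle'' you isolate is a genuine gap in the paper's argument, not in yours. Applying \Cref{thm:Ergodic Lebesgue Differentiation} on $(X,\rho,\mu\vert_{\mathfrak P})$ presupposes both that $f$ is $\mathfrak P$-measurable and that $T$ is a measure-preserving transformation of $(X,\mathfrak P,\mu\vert_{\mathfrak P})$, i.e.\ $T^{-1}\mathfrak P\subseteq\mathfrak P$; neither is among the hypotheses. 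Done by hand, the Lebesgue-differentiation step applied to $f^*=\mathbb E[f\mid\mathfrak I]$ returns $\mathbb E[f^*\mid\mathfrak P]$ rather than $f^*$, so the limit actually produced is $\mathbb E\left[\mathbb E[f\mid\mathfrak I]\mid\mathfrak P\right]$, exactly as you say, and this need not equal $\mathbb E[f\mid\mathfrak I\cap\mathfrak P]$. Indeed \Cref{thm:stronger martingale theorem} as stated is false: on $X=\{1,2,3,4\}$ with uniform measure, let $T$ swap $1\leftrightarrow2$ and $3\leftrightarrow4$, let $\mathscr P^{(n)}=\{\{1\},\{2,3,4\}\}$ for every $n$, and let $f=\chi_{\{1,2\}}$. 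Then $f\circ T=f$, so $\mathbb E\left[\A_k f\mid\mathscr P^{(n)}\right]=\mathbb E\left[f\mid\mathscr P^{(1)}\right]$ for all $n,k$, taking the values $1$ and $1/3$, whereas $\mathfrak I\cap\mathfrak P=\{\emptyset,X\}$ and $\mathbb E[f\mid\mathfrak I\cap\mathfrak P]\equiv 1/2$.

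Your repair is sound: under the added hypothesis $T^{-1}\mathfrak P\subseteq\mathfrak P$, the identity $\mathbb E[g^*\mid\mathfrak P]\circ T=\mathbb E\left[g^*\mid T^{-1}\mathfrak P\right]=\mathbb E\left[\mathbb E[g^*\mid\mathfrak P]\mid T^{-1}\mathfrak P\right]$ together with equality of $L^2$ norms forces $\mathbb E[g^*\mid\mathfrak P]$ to be $\mu$-a.e.\ $T$-invariant, hence $\mathfrak I\cap\mathfrak P$-measurable up to null sets, and the tower property completes the identification. Since $\mathscr P^{(n)}=\bigvee_{i=0}^{n-1}T^{-i}\mathscr P$ does satisfy $T^{-1}\mathfrak P\subseteq\mathfrak P$, this rescues \Cref{thm:martingale theorem} even though the more general statement needs the extra hypothesis. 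The remaining components of your argument --- Doob plus the dominated ergodic theorem for the strong $(p,p)$ maximal inequality, the $\A_k\A_K$ device with the mean ergodic theorem on $L^\infty$, L\'evy upward convergence for the invariant part, and the Banach principle --- track \Cref{lem:Facts about HLP}, \Cref{lem:Kernel of invariant expectation}, \Cref{lem:L infty convergence} and the closing density argument faithfully and are correct.
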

	
	\begin{proof}
	Our proof uses the sequence $\left( \mathscr{P}^{(n)} \right)_{n = 1}^\infty$ to induce a geometry on $(X, \mu\vert_\mathfrak{P})$, and then uses probabilistic techniques to show that this geometry satisfies the conditions of \Cref{thm:Ergodic Lebesgue Differentiation}. Set $\mathscr{P}^{(0)} = \{X\}$.
	
	Let $\pi : X \to \prod_{n = 1}^\infty \mathscr{P}^{(n)}$ be the measurable map defined by the rule
	\begin{align*}
	x	& \in \pi(x)(n)	& \textrm{for all $x \in X, n \in \N$.}
	\end{align*}
	We can thus induce a (pseudo)metric $\rho : X \times X \to [0, 1]$ by
	\begin{align*}
		\rho(x_1, x_2)	& = \begin{cases}
			\frac{1}{m + 1/2}	& \textrm{if $x_1 \neq x_2, m = \min \left\{ n \in \N : \pi(x_1)(n) \neq \pi(x_2)(n) \right\}$,} \\
			0	& \textrm{otherwise,}
		\end{cases}
	\end{align*}
	meaning that
	$$B(x, 1/n) = \left\{ x' \in X : \forall i \in \{1, \ldots, n - 1\} \; \left( \pi(x)(i) = \pi\left(x'\right)(i) \right) \right\} .$$
	The Borel $\sigma$-algebra induced by $\rho$ will be exactly $\mathfrak{P}$. Let $\mathbf{H}^*$ denote the associated Hardy-Littlewood maximal operator with respect to $\rho$. Then $\mathbf{H}^* f = \sup_{n \geq 0} \mathbb{E} \left[ |f| \mid \mathscr{P}^{(n)} \right]$, and the Doob maximal inequality for martingales (c.f. \cite[Theorem 5.2.1]{StroockProbability}) tells us that
	\begin{align*}
		\mu \left( \left\{ x \in X : \mathbf{H}^* f(x) \geq \epsilon \right\} \right)	& \leq \frac{1}{\epsilon} \|f\|_1	& \textrm{for all $f \in L^1(\mu), \epsilon > 0$,}
	\end{align*}
	meaning in particular that $\mu$ has the Hardy-Littlewood property with respect to $\rho$. Therefore, given $f \in L^p(\mu), p > 1$, we can apply \Cref{thm:Ergodic Lebesgue Differentiation} to conclude that
	\begin{align*}
		\lim_{n, k \to \infty} \mathbb{E} \left[ \A_k f \mid \mathscr{P}^{(n)} \right] (x)	& = \lim_{r \searrow 0, k \to \infty} U_{k, r} f (x) = \mathbb{E} \left[ f \mid \mathfrak{I} \cap \mathfrak{P} \right](x)	& \textrm{a.s.}
	\end{align*}
	\end{proof}
	
	\begin{proof}[Proof of \Cref{thm:martingale theorem}]
	This follows immediately from \Cref{thm:stronger martingale theorem} with $\mathscr{P}^{(n)} = \bigvee_{i = 0}^{n - 1} T^{-i} \mathscr{P}$.
	\end{proof}
		
	\section{Open questions}\label{sec:Open questions}
	
	We pose the following questions left open by \Cref{thm:Ergodic Lebesgue Differentiation}, and comment on what we know about them currently.
	
	\begin{Qn}\label{qn:Lebesgue differentiation property}
	Does \Cref{thm:Ergodic Lebesgue Differentiation} still hold if we replace the Hardy-Littlewood property with the following condition: for every $f \in L^1(\mu)$, we have that $\lim_{r \searrow 0} \dashint_{B(x, r)} f \mathrm{d} \mu = f(x)$ a.s.?
	\end{Qn}
	
	This condition -i.e. that the Lebesgue differentiation theorem holds on the space- is necessary, since this is the special case of \Cref{thm:Ergodic Lebesgue Differentiation} where $T$ is the identity map on $X$. However, as we noted in our discussion following \Cref{def:HLP}, this ``Lebesgue differentiation property" is strictly weaker than the Hardy-Littlewood property used to prove \Cref{thm:Ergodic Lebesgue Differentiation}.

	\begin{Qn}\label{qn:L^1}
	Does \Cref{thm:Ergodic Lebesgue Differentiation} hold for all $f \in L^1(\mu)$?
	\end{Qn}
	
	While it is straightforward to show that $\mathbf{U}^*$ ``inherits" a strong type $(p, p)$ inequality from $\mathbf{H}^*, \mathbf{A}^*$ for $p > 1$ (c.f. \Cref{lem:Facts about HLP}), more care would be needed to recover a weak type $(1, 1)$ inequality for $\mathbf{U}^*$. We conclude our discussion with a partial result along these lines, showing that $U_{r_k, k} f \stackrel{k \to \infty}{\to} f^*$ for any $f \in L^1(\mu)$, provided that $r_k \searrow 0$ satisfies a ``rapid decay" condition that depends on $f$.
	
	\begin{Prop}\label{prop:Decay condition for one function}
	Consider $f \in L^1(\mu)$. Then there exists a sequence $\left(\delta_k \right)_{k = 1}^\infty$ of positive numbers and a measurable set $X' \subseteq X$ with the following properties:
		\begin{enumerate}
			\item $\mu\left(X'\right) = 1$, and
			\item if $(r_k)_{k = 1}^\infty$ is a sequence with $0 < r_k \leq \delta_k$ for all sufficiently large $k \in \N$, and $x \in X'$, then $\dashint_{B(x, r_k)} \A_k f \mathrm{d} \mu \stackrel{k \to \infty}{\to} f^*(x)$.
		\end{enumerate}
	\end{Prop}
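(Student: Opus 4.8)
The plan is to exploit the fact that, although $\mathbf{U}^*$ need not satisfy a weak type $(1,1)$ inequality, \emph{each individual} function $\A_k f$ lies in $L^1(\mu)$ and is therefore amenable to the ordinary Lebesgue differentiation theorem supplied by the first claim of \Cref{lem:Facts about HLP}. Since $T$ is measure-preserving, $\|\A_k f\|_1 \leq \|f\|_1 < \infty$, so for each fixed $k$ we have $\dashint_{B(x,r)} \A_k f \, \mathrm{d}\mu \to \A_k f(x)$ a.s.\ as $r \searrow 0$. The idea is then to choose the radii shrinking fast enough (as a function of $k$) that the spatial average $\dashint_{B(x,r_k)} \A_k f \, \mathrm{d}\mu$ stays within $1/k$ of the pointwise value $\A_k f(x)$, which itself converges to $f^*(x)$ by the pointwise ergodic theorem. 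Thus the rapid-decay hypothesis decouples the two limiting processes: the temporal average is resolved by Birkhoff at the center point $x$, while the discrepancy introduced by the spatial average is made negligible by forcing $r_k$ below a $k$-dependent scale.

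Concretely, I would first fix $k$ and consider, for $\delta > 0$, the bad set
$$E_k(\delta) := \left\{ x \in X : \exists\, r \in (0, \delta] \text{ with } \left| \dashint_{B(x,r)} \A_k f \, \mathrm{d}\mu - \A_k f(x) \right| > \tfrac{1}{k} \right\}.$$
Phrasing the bad event existentially over $r \in (0,\delta]$ makes the family $E_k(\delta)$ decreasing in $\delta$, and $\bigcap_{\delta > 0} E_k(\delta)$ is contained in the $\mu$-null set where $\dashint_{B(x,r)} \A_k f \, \mathrm{d}\mu \not\to \A_k f(x)$. By downward continuity of measure I can then choose $\delta_k > 0$ small enough that $\mu(E_k(\delta_k)) < 2^{-k}$.

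With the $\delta_k$ so chosen, the Borel--Cantelli lemma guarantees that $\mu$-almost every $x$ lies in only finitely many $E_k(\delta_k)$. Let $X'$ be the intersection of the set of such $x$ with the full-measure set on which $\A_k f(x) \to f^*(x)$; then $\mu(X') = 1$. For $x \in X'$ and any sequence $(r_k)$ with $0 < r_k \leq \delta_k$ for all sufficiently large $k$, we have $x \notin E_k(\delta_k)$ for all large $k$, which by the definition of $E_k(\delta_k)$ forces $\left| \dashint_{B(x,r_k)} \A_k f \, \mathrm{d}\mu - \A_k f(x) \right| \leq \tfrac{1}{k}$; combining this with $\A_k f(x) \to f^*(x)$ via the triangle inequality yields $\dashint_{B(x,r_k)} \A_k f \, \mathrm{d}\mu \to f^*(x)$, as required.

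The delicate point is the setup of $E_k(\delta)$ rather than any estimate: the existential formulation over $r \in (0,\delta]$ is exactly what makes the family monotone with null intersection, so that a single threshold $\delta_k$ controls all radii $r \leq \delta_k$ simultaneously and downward continuity of measure applies. The only other technical matter is the measurability of the localized suprema defining $E_k(\delta)$, which is handled just as for the maximal operators $\mathbf{H}^*, \mathbf{A}^*, \mathbf{U}^*$ elsewhere in the paper; beyond this, the argument is routine.
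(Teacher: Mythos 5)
Your proof is correct and follows essentially the same route as the paper's: a per-$k$ application of the Lebesgue differentiation theorem, a threshold $\delta_k$ making the bad set have measure at most $2^{-k}$, Borel--Cantelli, and the pointwise ergodic theorem at the center point. The only (cosmetic) difference is that you differentiate $\A_k f$ directly as a single $L^1$ function, whereas the paper differentiates each $f \circ T^j$ separately and averages the resulting errors over $0 \leq j \leq k-1$; both yield the same $1/k$ bound and the same conclusion.
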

	
	\begin{proof}
		Take $\Omega = \left\{ x \in X : \A_k f(x) \stackrel{k \to \infty}{\to} f^*(x) \right\}$, where $\mu(\Omega) = 1$ by the pointwise ergodic theorem. For $j \in \N \cup \{0\}, n \in \N, h \in \N$, set
		\begin{align*}
			E_{j, n, h}	& = \left\{ x \in X : \sup_{0 < r < 1/h} \left| \dashint_{B(x, r)} f \circ T^j \mathrm{d} \mu - f \left( T^j x \right) \right| \leq 1 / n \right\}.
		\end{align*}
		Then for every fixed $j, n$, we can see that the sequence $\left( E_{j, n, h} \right)_{h = 1}^\infty$ is increasing, and \Cref{lem:Facts about HLP} tells us that $\mu \left( \bigcup_{h = 1}^\infty E_{j, n, h} \right) = 1$. For $k \in \N, 0 \leq j \leq k - 1$, choose $h_{j, k} \in \N$ such that
		\begin{align*}
			\mu \left( E_{j, k, h_{j, k}} \right)	& \geq 1 - \frac{1}{k \cdot 2^k} ,
		\end{align*}
		and set
		\begin{align*}
			F_k	& = \bigcap_{j = 0}^{k - 1} E_{j, k, h_{j, k}} ,	& \delta_k	& = \left( \max_{0 \leq j \leq k - 1} h_{j, k} \right)^{-1} .
		\end{align*}
		Then $\mu \left( X \setminus F_k \right) \leq 2^{-k}$, so the Borel-Cantelli lemma tells us that $\mu \left( \bigcup_{K = 1}^\infty \bigcap_{k = K}^\infty F_k \right) = 1$, i.e. almost every $x \in X$ is in all but finitely many of the $F_k$. Set $X' = \Omega \cap \left( \bigcup_{K = 1}^\infty \bigcap_{k = K}^\infty F_k \right)$.
		
		Now take $(r_k)_{k = 1}^\infty$ to be a sequence such that $0 < r_k \leq \delta_k$ for all $k \geq k_0$, for some $k_0 \in \N$, and consider a point $x \in X'$. Then there exists $k_1 \in \N$ such that $x \in F_k$ for all $k \geq k_1$. Set $K = \max \left\{ k_0, k_1 \right\}$. Then for all $k \geq K$, we have that
		\begin{align*}
			\left| \dashint_{B(x, r_k)} \A_k f \mathrm{d} \mu - f^*(x) \right|	& \leq \left| \dashint_{B(x, r_k)} \A_k f \mathrm{d} \mu - \A_k f(x) \right| + \left| \A_k f(x) - f^*(x) \right| \\
			& \leq \left[ \frac{1}{k} \sum_{j = 0}^{k - 1} \left| \dashint_{B(x, r_k)} f \circ T^j \mathrm{d} \mu - f \left( T^j x \right) \right| \right] + \left| \A_k f(x) - f^*(x) \right| \\
			& \leq \frac{1}{k} + \left| \A_k f(x) - f^*(x) \right|			& \stackrel{k \to \infty}{\to} 0 .
		\end{align*}
	\end{proof}
	
	\appendix
	
	\section{Extending \Cref{thm:Ergodic Lebesgue Differentiation} to averages along the squares}\label{sec:exotic averages}
	
	Our goal in this appendix is to extend the techniques used in \Cref{sec:Proof of main theorem} to include other, more ``exotic" types of ergodic averages by proving a version of \Cref{thm:Ergodic Lebesgue Differentiation} where the ergodic averages $\frac{1}{k} \sum_{j = 0}^{k - 1} T^j$ are replaced by the ergodic averages  $\frac{1}{k} \sum_{j = 0}^{k - 1} T^{j^2}$along the squares. While our techniques used here are slightly different from those used to prove \Cref{thm:Ergodic Lebesgue Differentiation}, we will explain afterwards how these techniques can be generalized to other subsequential or weighted ergodic averages.
	
	\begin{Thm}\label{thm:Lebesgue differentiation along Bourgain averages}
	Consider a probability measure-preserving dynamical system $(X, \mu, T)$, where $X$ is a metric space, and $\mu$ is a Borel probability measure on $X$ with the Hardy-Littlewood property. Then for all $f \in L^p(\mu), p > 1$, the limit
	\begin{align*}
		\lim_{r \searrow 0, k \to \infty} \mu(B(x, r))^{-1} \int_{B(x, r)} \frac{1}{k} \sum_{j = 0}^{k - 1} f \circ T^{j^2} \mathrm{d} \mu
	\end{align*}
	exists a.s.
	\end{Thm}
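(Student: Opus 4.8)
The plan is to reproduce the three-step architecture of \Cref{sec:Proof of main theorem}, substituting for the two ergodic-theoretic inputs used there. Write $\mathbf{S}_k f := \frac{1}{k}\sum_{j=0}^{k-1} f\circ T^{j^2}$ for the averages along the squares, $\mathbf{S}^\ast f := \sup_{k\in\N}\mathbf{S}_k|f|$ for the associated maximal operator, and set $V_{r,k}f(x) := \dashint_{B(x,r)}\mathbf{S}_k f\,\mathrm{d}\mu$ together with $\mathbf{V}^\ast f := \sup_{r>0}\sup_{k\in\N} V_{r,k}|f|$. The two black boxes replacing the pointwise and dominated ergodic theorems are Bourgain's pointwise ergodic theorem along the squares, which asserts that $\mathbf{S}_k f$ converges a.s.\ for every $f\in L^p(\mu)$, $p>1$, and Bourgain's maximal inequality $\|\mathbf{S}^\ast f\|_p\le C_p'\|f\|_p$, valid in the same range. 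Both deep inputs are genuinely needed, and both are restricted to $p>1$, consistent with the hypothesis of the theorem.

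For the maximal inequality (step 2) I would note the pointwise domination $\mathbf{V}^\ast f \le \mathbf{H}^\ast\mathbf{S}^\ast f$, exactly mirroring $\mathbf{U}^\ast\le \mathbf{H}^\ast\mathbf{A}^\ast$ from \Cref{lem:Facts about HLP}: indeed $|\mathbf{S}_k f|\le \mathbf{S}_k|f|\le \mathbf{S}^\ast f$, so $V_{r,k}|f|\le\dashint_{B(x,r)}\mathbf{S}^\ast f\,\mathrm{d}\mu\le \mathbf{H}^\ast\mathbf{S}^\ast f$. Composing Bourgain's maximal inequality with the strong $(p,p)$ bound for $\mathbf{H}^\ast$ from \Cref{lem:Facts about HLP} then yields $\|\mathbf{V}^\ast f\|_p\le C_p C_p'\|f\|_p$ for all $p>1$.

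The interesting step is the dense-class convergence (step 1), the analogue of \Cref{lem:Kernel of invariant expectation} and \Cref{lem:L infty convergence}; here the technique departs from the main proof, since the smoothing identity $\|\A_k\A_K f-\A_k f\|_\infty\to0$ and the mean ergodic theorem are unavailable, and the limit can no longer be identified with $f^\ast$. Instead, for $g\in L^\infty(\mu)$ (dense in $L^p(\mu)$), let $\tilde g:=\lim_k \mathbf{S}_k g$ denote the a.s.\ limit furnished by Bourgain's theorem, noting $\tilde g\in L^2(\mu)$ since $|\tilde g|\le\mathbf{S}^\ast g$. I would estimate
\begin{align*}
\left|V_{r,k}g(x)-\tilde g(x)\right| &\le \dashint_{B(x,r)}\left|\mathbf{S}_k g-\tilde g\right|\mathrm{d}\mu + \left|\dashint_{B(x,r)}\tilde g\,\mathrm{d}\mu-\tilde g(x)\right|,
\end{align*}
where the second term tends to $0$ as $r\searrow0$ by the Lebesgue differentiation theorem (\Cref{lem:Facts about HLP}). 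To control the first term uniformly in the joint limit, set $g_K:=\sup_{k\ge K}\left|\mathbf{S}_k g-\tilde g\right|$, which decreases a.s.\ to $0$ and lies in $L^2(\mu)\subseteq L^1(\mu)$, being dominated by $\mathbf{S}^\ast g+|\tilde g|$. For $k\ge K$ one has $\dashint_{B(x,r)}|\mathbf{S}_k g-\tilde g|\,\mathrm{d}\mu\le \dashint_{B(x,r)}g_K\,\mathrm{d}\mu$, and letting $r\searrow0$ and $k\to\infty$ gives $\limsup_{r\searrow0,k\to\infty}\dashint_{B(x,r)}|\mathbf{S}_k g-\tilde g|\,\mathrm{d}\mu\le g_K(x)$ a.s.\ for every $K$ (again by Lebesgue differentiation applied to $g_K$). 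Sending $K\to\infty$ forces this $\limsup$ to vanish, whence $V_{r,k}g\to\tilde g$ a.s.

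Finally (step 3) I would run the Banach-principle argument essentially verbatim from the proof of \Cref{thm:Ergodic Lebesgue Differentiation}, now in oscillation form since the limit is not identified: for arbitrary $f\in L^p(\mu)$ and $g\in L^\infty(\mu)$, the oscillation of $V_{r,k}f$ equals that of $V_{r,k}(f-g)$ and is bounded by $2\mathbf{V}^\ast(f-g)$ a.s., so Chebyshev's inequality with the maximal bound gives $\mu(\{x:\mathrm{osc}\ge\epsilon\})\le (2C_pC_p'/\epsilon)^p\|f-g\|_p^p$, which is arbitrarily small. The only real obstacle I anticipate is the joint $r\searrow0,\ k\to\infty$ limit in the first term above; the monotone-envelope device $g_K$ is precisely what decouples the spatial averaging from the convergence of $\mathbf{S}_k g$, and is the point at which Bourgain's pointwise theorem and maximal inequality are used in tandem. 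I would also remark that, unlike in \Cref{thm:Ergodic Lebesgue Differentiation}, the limit is not $\mathbb{E}[f\mid\mathfrak{I}]$: the square averages retain the rational part of the spectrum, since the Gauss-sum contributions $\frac{1}{k}\sum_{j}e^{2\pi i (p/q)j^2}$ do not vanish, so no clean description of the limit is available and the theorem asserts only its existence.
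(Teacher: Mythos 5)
Your proposal is correct, and its overall architecture (dense-class convergence, maximal inequality, Banach principle) matches the paper's: the maximal-inequality step (the domination $\mathbf{V}^*\le\mathbf{H}^*\mathbf{B}^*$ composed with Bourgain's maximal inequality and part (2) of \Cref{lem:Facts about HLP}) and the final density/oscillation argument are essentially identical to what the paper does. Where you genuinely diverge is the dense-class step. The paper invokes the spectral characterization \eqref{eq:spectral analysis of Bourgain averages} of $\lim_k \mathbb{B}_k$, splits $f\in L^\infty(\mu)$ as $\mathbb{E}[f\mid\mathfrak{K}]+h$ along the Kronecker factor, treats eigenfunctions by hand (showing $V_{r,k}\psi\to c(\lambda)\psi$ for $\psi\in\mathcal{E}_\lambda\cap L^\infty(\mu)$ and extending by density), and kills the orthocomplement $h$ with the sets $E_K=\{\sup_{k\ge K}|\mathbb{B}_k h|\ge\epsilon/2\}$ of vanishing measure and the bound $\mathbf{H}^*\chi_{E_K}$. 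You instead use only the bare statement of Bourgain's pointwise theorem --- the a.s.\ existence of $\tilde g=\lim_k\mathbf{S}_k g$ --- and control the joint limit via the decreasing envelope $g_K=\sup_{k\ge K}|\mathbf{S}_k g-\tilde g|$ together with the Lebesgue differentiation theorem applied to each $g_K$ (legitimate, since $g_K\le 2\|g\|_\infty$ and $g_K\searrow 0$ a.s., so a countable intersection over $K$ of full-measure sets finishes it). The two devices ($E_K$ versus $g_K$) are cousins, but yours applies directly to all of $g$ rather than only to its weakly mixing part, so you never need the Kronecker decomposition or any identification of the limit. What the paper's route buys is an explicit formula for the limit on the dense class, namely $\sum_{\lambda}c(\lambda)P_\lambda f$; what yours buys is economy of input --- only pointwise convergence plus the maximal inequality are used --- which makes the generalization to arbitrary ``pointwise good'' sequences, which the paper only sketches in its closing remark, entirely transparent. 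Your closing observation that the limit is not $\mathbb{E}[f\mid\mathfrak{I}]$ because the Gauss-sum coefficients $c(\lambda)$ do not vanish on the rational spectrum is consistent with \eqref{eq:spectral analysis of Bourgain averages}.
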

	
	\begin{proof}
	Assume throughout this proof that $1 < p \leq 2$. For this proof, we will write
		\begin{align*}
			\mathbb{B}_{k}	& : = \frac{1}{k} \sum_{j = 0}^{k - 1} T^{j^2} ,	& \mathbf{B}^* f	& : = \sup_{k \in \N} \mathbb{B}_k |f| ,	& V_{r, k} f (x)	& = \dashint_{B(x, r)} \mathbb{B}_k f \mathrm{d} \mu,	& \mathbf{V}^* f(x)	& : = \sup_{r > 0, k \in \N} V_{r, k} |f|(x) .
		\end{align*}
		This proof relies on a spectral analysis of the sequence $(\mathbb{B}_k)_{k = 1}^\infty$, where the $\mathbb{B}_k$ are interpreted as operators $L^2(\mu) \to L^2(\mu)$. Specifically, we have that
		\begin{equation}\label{eq:spectral analysis of Bourgain averages}
		\mathbb{B}_k f \stackrel{k \to \infty}{\to} \sum_{\lambda \in \sigma_{pp}(T)} c(\lambda) P_\lambda f ~ \textrm{a.s. for all $f \in L^2(\mu)$},
		\end{equation}
		where $\sigma_{pp}(T) \subseteq \{\lambda \in \C: |\lambda| = 1\}$ is the pure point spectrum of $T$, i.e. the set of eigenvalues of $T$, where $c(\lambda) = \lim_{k \to \infty} \frac{1}{k} \sum_{j = 0}^{k - 1} \lambda^{j^2}$, and where $P_\lambda : L^2(\mu) \to L^2(\mu)$ denotes the orthogonal projection of $L^2(\mu)$ onto the eigenspace $\mathcal{E}_\lambda = \ker(T - \lambda) \subseteq L^2(\mu)$ (c.f. \cite{BourgainPolynomials}).
		
		The two main ingredients of this proof are the same as in our proof of \Cref{thm:Ergodic Lebesgue Differentiation}, namely convergence for a dense subspace of $L^p(\mu)$ and a maximal inequality for $\mathbf{V}^*$ to extend this convergence to all of $L^p(\mu)$. The maximal inequality for $\mathbf{V}^*$ can be derived in much the same way as we derived the maximal inequality for $\mathbf{U}^*$ in \Cref{lem:Facts about HLP}, since it is known that there exist constants $B_p$ for $p > 1$ such that $\left\| \mathbf{B}^* f \right\|_p \leq B_p \|f\|_p$ for all $f \in L^p(\mu)$ (c.f. \cite[Theorem 1]{BourgainPolynomials}), and $\mathbf{V}^* \leq \mathbf{H}^* \mathbf{B}^*$. That is to say that for all $p > 1$, there exists a constant $M_p \geq 1$ such that
		\begin{equation}\label{eq:maximal inequality for Bourgain Lebesgue averages}
			\left\| \mathbf{V}^* f \right\| \leq M_p \|f\|_p ~ \textrm{for all $f \in L^p(\mu)$.} 
		\end{equation}
		This leaves us the task of showing that $V_{r, k} f$ converges a.s. for all $f \in L^\infty (\mu)$.
		
		Let $\mathfrak{K}$ denote the Kronecker factor of $(X, \mu, T)$, i.e. the $\sigma$-algebra generated by the eigenfunctions of $T$. For $f \in L^\infty(\mu)$, write
		\begin{align*}
			g	& = \mathbb{E} \left[ f \mid \mathfrak{K} \right] = \sum_{\lambda \in \sigma_{pp}(T)} P_\lambda f ,	& h	& = f - g ,
		\end{align*}
		where $P_\lambda$ is the orthogonal projection onto $\mathcal{E}_\lambda : = \ker(\lambda - T)$. Note in particular that if $f \in L^\infty(\mu)$, then $g = \mathbb{E} \left[ f \mid \mathfrak{K} \right] \in L^\infty(\mu)$, and therefore $h = f - g \in L^\infty(\mu)$.
		
		\textbf{Step 1 - analyzing $g$:} We will prove the following claim: that if $\phi \in \bigoplus_{\lambda \in \sigma_{pp}(T)} \mathcal{E}_\lambda$, then $V_{r, k} \phi \stackrel{r \searrow 0, k \to \infty}{\to} \sum_{\lambda \in \sigma_{pp}(T)} c(\lambda) P_\lambda \phi$ a.s.
		
		Suppose that $\psi \in \mathcal{E}_\lambda \cap L^\infty(\mu)$ for some $\lambda \in \sigma_{pp}(T)$. Then
		\begin{align*}
			\left| V_{r, k} \psi(x) - c(\lambda) \psi(x) \right|	& \leq \left| \dashint_{B(x, r)} \left( \left( \frac{1}{k} \sum_{j = 0}^{k - 1} \lambda^{j^2} \right) - c(\lambda) \right) \psi \mathrm{d} \mu \right| + |c(\lambda)| \left| \dashint_{B(x, r)} \psi \mathrm{d} \mu - \psi(x) \right| \\
			& \leq \left| \left( \frac{1}{k} \sum_{j = 0}^{k - 1} \lambda^{j^2} \right) - c(\lambda) \right| \cdot \|\psi\|_\infty +  \left| \dashint_{B(x, r)} \psi \mathrm{d} \mu - \psi(x) \right| \\
			& \stackrel{r \searrow 0, k \to \infty}{\to} 0	& \textrm{a.s.}
		\end{align*}
		Because $\mathcal{E}_\lambda \cap L^\infty(\mu)$ is dense in $\mathcal{E}_\lambda$ for all $\lambda \in \sigma_{pp}(T)$ (c.f. \cite[Lemma 17.3]{EisnerOperators}), we can use \eqref{eq:maximal inequality for Bourgain Lebesgue averages} again to say that if $\phi \in \bigoplus_{\lambda \in \sigma_{pp}(T)} \mathcal{E}_\lambda$, then
		\begin{align*}
			V_{r, k} \phi(x)	& \stackrel{r \searrow 0, k \to \infty}{\to} \sum_{\lambda \in \sigma_{pp}(T)} c(\lambda) P_{\lambda} \phi	& \textrm{a.s.}
		\end{align*}
		In particular, this means that $V_{r, k} g \stackrel{r \searrow 0, k \to \infty}{\to} \sum_{\lambda \in \sigma_{pp}(T)} c(\lambda) P_\lambda g$ a.s.
		
		\textbf{Step 2 - analyzing $h$:} We want to show that $V_{r, k} h \stackrel{r \searrow 0, k \to \infty}{\to} 0$ a.s. Assume without loss of generality that $\|h\|_\infty \leq 1$, and fix $\epsilon > 0$. Observe that
		\begin{align*}
			\left\{ x \in X : \limsup_{r \searrow 0, k \to \infty} \left| V_{r, k} h(x) \right| \geq \epsilon \right\}	& \subseteq \bigcap_{K = 1}^\infty \left\{ x \in X : \sup_{r > 0} \sup_{k \geq K} \left| V_{r, k} h(x) \right| \geq \epsilon \right\} .
		\end{align*}
		Set $E_{K} = \left\{ x \in X : \sup_{k \geq K} \left| \mathbb{B}_k h (x) \right| \geq \epsilon / 2 \right\}$. We know from \eqref{eq:spectral analysis of Bourgain averages} that $\mu(E_{K}) \stackrel{K \to \infty}{\to} 0$. Therefore
		\begin{align*}
			\sup_{r > 0} \sup_{k \geq K} \left| V_{r, k} h(x) \right|	& = \sup_{r > 0} \dashint_{B(x, r)} \sup_{k \geq K} \left| \B_k h\right| \mathrm{d} \mu  \\
			& \leq \left[ \sup_{r > 0} \dashint_{B(x, r)} \chi_{E_K} \sup_{k \geq K} |\B_k h| \mathrm{d} \mu \right] + \left[ \dashint_{B(x, r)} \chi_{X \setminus {E_K}} \sup_{k \geq K} |\B_k h| \mathrm{d} \mu \right] \\
			& \leq \sup_{r > 0} \dashint_{B(x, r)} \chi_{E_K} \|h\|_\infty \mathrm{d} \mu + \sup_{r > 0} \dashint_{B(x, r)} \frac{\epsilon}{2} \mathrm{d} \mu	\\		& \leq \left[ \mathbf{H}^* \chi_{E_K} (x) \right] + \frac{\epsilon}{2}	.
		\end{align*}
		Thus for $K \in \N$, we see
		\begin{align*}
			\mu \left( \left\{ x \in X : \limsup_{r \searrow 0, k \to \infty} \left| V_{r, k} h(x) \right| \geq \epsilon \right\} \right)	& \leq \mu \left( \left\{ x \in X : \mathbf{H}^* \chi_{E_K} (x) \geq \epsilon / 2 \right\} \right) \\
				& \leq \left( \frac{2}{\epsilon} \right)^2 \left\| \mathbf{H}^* \chi_{E_K} \right\|_2^2	& \leq \frac{4 C_2^2}{\epsilon^2} \mu(E_{K})^2 ,
		\end{align*}
		where $C_2$ is as in \Cref{lem:Facts about HLP}. By taking $K \to \infty$, we can conclude that $\limsup_{r \searrow 0, k \to \infty} \left| V_{r, k} h(x) \right| < \epsilon$ a.s., and since our choice of $\epsilon > 0$ was arbitrary, it follows that $V_{r, k} h \stackrel{r \searrow 0, k \to \infty}{\to} 0$ a.s.
		
		\textbf{Step 3 - the density argument:} We've already established that if $f \in L^\infty(\mu)$, then \linebreak$\lim_{r \searrow 0, k \to \infty} V_{r, k} f $ exists a.s. Since $L^\infty (\mu)$ is dense in $L^p(\mu)$ for all $p > 1$, we can use \eqref{eq:maximal inequality for Bourgain Lebesgue averages} to extend the convergence to all $f \in L^p(\mu)$. This is essentially the same argument as we used in our proof of \Cref{thm:Ergodic Lebesgue Differentiation}, so we won't repeat it.
	\end{proof}
	
	The characterization \eqref{eq:spectral analysis of Bourgain averages} of the limit $\lim_{k \to \infty} \B_k$ is the heart of our argument. Similar characterizations are available for other weighted ergodic averages and subsequential ergodic averages (c.f. \cite[Theorems 21.2 and 21.14]{EisnerOperators}), meaning the techniques used in our proof of \Cref{thm:Lebesgue differentiation along Bourgain averages} can be generalized to so-called ``pointwise good" sequences and weights, as defined in \cite[Chapter 21]{EisnerOperators}. As such, \Cref{thm:Lebesgue differentiation along Bourgain averages} can be seen as a stand-in for a more general theorem about a wider class of ergodic averages.
	
	\section*{Acknowledgments}
	
	We thank Itai Bar Deroma for some helpful comments on and questions about an earlier version of this project, as well as Tom Meyerovitch for some helpful discussions. This research was supported by Israel Science Foundation grant no. 985/23.
	
	\bibliography{/Users/AJYou/OneDrive/Desktop/Research_stuff/Bibliography/Bibliography}
	
\end{document}